  \theoremstyle{plain}
    \newtheorem{thm}{Theorem}[section]
    \newtheorem{prop}[thm]{Proposition}
    \newtheorem{subsec}[thm]{}
\theoremstyle{definition}
    \newtheorem{defn}[thm]{Definition}
        \newtheorem{remark}[thm]{Remark}
\theoremstyle{remark}
\title{}
\author{}
\date{}
\begin{document}
\title{Homotopy $G$-algebra structure on the cochain complex of hom-type algebras}

\author{Apurba Das}

\curraddr{Indian Statistical Institute, Kolkata, Stat-Math Unit, 203 BT Road, Kolkata-700108, India}
\email{apurbadas348@gmail.com}

\subjclass[2010]{16E40, 17A30, 17A99.}
\keywords{Gerstenhaber algebra, homotopy $G$-algebras, hom-algebras.}

\begin{abstract}
A hom-associative algebra is an algebra whose associativity is twisted by an algebra homomorphism. We show that the Hochschild type cochain complex of a hom-associative algebra carries a homotopy $G$-algebra structure. As a consequence, we get a Gerstenhaber algebra structure on the cohomology of a hom-associative algebra. We also find similar results for hom-dialgebras. 
\end{abstract}

\noindent

\thispagestyle{empty}

\maketitle


\vspace{0.2cm}
\section{Introduction}
In \cite{gers} Gerstenhaber showed that the Hochschild cohomology $H^\bullet(A,A)$ of an associative algebra $A$ carries a certain algebraic structure. This algebraic structure is now known as Gerstenhaber algebra. A Gerstenhaber algebra  is a graded commutative associative algebra together with a degree $-1$ graded Lie bracket which are compatible in the sense of a suitable Leibniz rule. 
An alternative proof of the same fact has been carried out by Gerstenhaber and Voronov \cite{gers-voro}. More precisely, they prove a more general statement that the Hochschild complex $C^\bullet (A,A)$ carries a homotopy $G$-algebra structure. 

A homotopy $G$-algebra is a brace algebra $(\mathcal{O} = \oplus \mathcal{O}(n), \{-\} \{-, \ldots, -\})$ together with a differential graded associative algebra structure on $\mathcal{O}$ satisfying some compatibility conditions \cite{gers-voro}. The brace algebra structure on $C^\bullet (A,A)$ is given by the classical braces introduced by Getzler-Jones \cite{getz-jon}, the  differential graded associative algebra structure on $C^\bullet (A,A)$ is given by the ususal cup product and the Hochschild coboundary (up to some signs). In \cite{gers-voro} the authors showed that the existence of the homotopy $G$-algebra structure on $C^\bullet (A, A)$ is based on the non-symmetric endomorphism operad structure on $C^\bullet (A, A)$ together with a multiplication on that operad. The same idea has been used to define a homotopy $G$-algebra structure on the dialgebra complex $CY^\bullet (D,D)$ of a dialgebra $D$ \cite{maj-muk}.

In this paper, we deal with certain type of algebras, called hom-type algebras.
 In these algebras, the identities defining the structures are twisted by homomorphisms.
 Recently, hom-type algebras have been studied by many authors.
The notion of hom-Lie algebras was first introduced by Hartwig, Larsson and Silvestrov \cite{hls}.
Hom-Lie algebras appeared in examples of $q$-deformations of the Witt and Virasoro algebras. Other type of algebras (e.g. associative, Leibniz, Poisson, Hopf,...) twisted by homomorphisms have also been studied. See \cite{makh-sil, makh-sil2} (and references there in) for more details.
Our main objective in this paper is the notion of hom-associative algebra introduced by Makhlouf and Silvestrov \cite{makh-sil}. A hom-associative algebra is an algebra $(A, \mu)$ whose associativity is twisted by an algebra homomorphism $\alpha : A \rightarrow A$ (cf. Definition \ref{hom-ass-defn}).  When $\alpha$ is the identity map, we recover the classical notion of associative algebras as a subclass. 

In \cite{amm-ej-makh, makh-sil2} the authors studied the formal one-parameter deformation of hom-associative algebras and introduce a Hochschild type cohomology theory for hom-associative algebras. Given a hom-associative algebra $(A, \mu, \alpha)$, its $n$-th cochain group $C^n_\alpha (A, A)$ consists of multiliear maps $f : A^{\otimes n} \rightarrow A$ which satisfies
$ \alpha \circ f = f \circ \alpha^{\otimes n} $ and the coboundary operator $\delta_\alpha$ is similar to the Hochschild coboundary but suitably twisted by $\alpha$. In \cite{amm-ej-makh} the authors also introduce a degree $-1$ graded Lie bracket $[-,-]_\alpha$ on the cochain groups $C^\bullet_\alpha (A, A)$ which passes on to cohomology. In \cite{das} the present author defines a cup product $\cup_\alpha$ on the cochain groups $C^\bullet_\alpha (A, A)$ and showed that it induces a graded commutative, associative product on the cohomology $H^\bullet_\alpha (A, A)$. Moreover, it was shown that the induced structures on the cohomology $H^\bullet_\alpha (A,A)$ makes it a Gerstenhaber algebra.

In this paper, we follow the method of Gerstenhaber and Voronov \cite{gers-voro}. We show that the cochain complex $C^\bullet_\alpha (A, A)$ carries a non-symmetric operad structure. This operad structure is similar to the endomorphism operad on $A$, however, twisted by $\alpha$. Moreover, the multiplication defining the hom-associative structure gives a multiplication in the above operad. Hence, by a result of  \cite{gers-voro}, it follows that the cochain complex $C^\bullet_\alpha (A,A)$ carries a homotopy $G$-algebra structure. As a consequence, we get a Gerstenhaber algebra structure on cohomology. This gives an alternative approach of the same result proved by the author \cite{das}.

The notion of (diassociative) dialgebras was introduced by Loday as a generalization of associative algebras \cite{lod}. The hom-analogue of a dialgebra is known as a hom-dialgebra \cite{yau2}. We discuss the above results for hom-dialgebras. Given a hom-dialgebra $D$, 
we show that the cochain complex $CY^\bullet_\alpha (D,D)$ defining the cohomology of a hom-dialgebra
carries a non-symmetric operad structure. Moreover, the operations defining the hom-dialgebra structure induces a multiplication on the operad. Hence, we conclude that the cochain complex $CY^\bullet_\alpha (D,D)$ inherits a homotopy $G$-algebra structure and the corresponding cohomology $HY^\bullet_\alpha (D,D)$ carries a Gerstenhaber algebra structure.

In Section \ref{sec2}, we recall some basic preliminaries on operads, braces and homotopy $G$-algebras. In Section \ref{sec3}, we first revise hom-associative algebras and prove our results for hom-associative algebras. Finally, in Section \ref{sec4}, we deal with hom-dialgebras.

\section{Preliminaries}\label{sec2}
In this section, we recall some basic definitions. See \cite{gers-voro, getz-jon} for more details.
\begin{defn}
	A  non-symmetric operad (non-$\sum$ operad in short) in the category of vector spaces is a collection of vector spaces $\{ \mathcal{O} (k) |~ k \geq 1 \}$ together with compositions
	\begin{align*}
	\gamma : \mathcal{O} (k) \otimes \mathcal{O} (n_1) \otimes \cdots \otimes \mathcal{O} (n_k) &\rightarrow \mathcal{O} (n_1 + \cdots + n_k) \\
	f \otimes g_1 \otimes \cdots \otimes g_k  &\mapsto  \gamma (f ; g_1 , \ldots, g_k )
	\end{align*}
	 which is associative in the sense that 
\begin{align*}
&\gamma \big(  \gamma (f; g_1 , \ldots, g_k); h_1 , \ldots, h_{n_1 + \cdots + n_k}    \big) \\
&= \gamma \big( f; ~\gamma (g_1 ; h_1, \ldots, h_{n_1}), ~ \gamma ( g_2 ; h_{n_1 + 1}, \ldots, h_{n_1 + n_2}) , \ldots, ~\gamma (g_k ; h_{n_1 + \cdots + n_{k-1}+1}, \ldots, h_{n_1 + \cdots + n_k})  \big)
\end{align*}	 
and there is an identity element id$ \in \mathcal{O} (1)$ such that
\begin{center}
$\gamma (f ; \underbrace{\text{id}, \ldots, \text{id}}_{k \text{ times}} ) = f = \gamma (\text{id}; f)$, ~~ \text{ for } $f \in \mathcal{O} (k)$. 
\end{center}
\end{defn}


A non-$\sum$ operad can also be described by compositions (called partial compositions)
$$\circ_i : \mathcal{O}(m) \otimes \mathcal{O}(n) \rightarrow \mathcal{O}(m+n-1), \quad 1 \leq i \leq m$$
satisfying
$$ \begin{cases} (f \circ_i g) \circ_{i+j-1} h = f \circ_i (g \circ_j h), \quad &\mbox{~~~ for } 1 \leq i \leq m, ~1 \leq j \leq n, \\ (f \circ_i g) \circ_{j+n-1} h = (f \circ_j h) \circ_i g, \quad  & \mbox{~~~ for } 1 \leq i < j \leq m, \end{cases}$$
for $f \in \mathcal{O}(m), ~ g \in \mathcal{O}(n), ~ h \in \mathcal{O}(p),$
and an identity element satisfying $ f \circ_i \text{id} = f =\text{id}\circ_1 f,$ for all $f \in \mathcal{O}(k)$ and $1 \leq i \leq m$. The two definitions of non-$\sum$ operad are related by
\begin{align}
f \circ_i g =~& \gamma (f ;~ \overbrace{\text{id}, \ldots, \text{id} , \underbrace{g}_{i\text{-th place}}, \text{id}, \ldots, \text{id}}^{m\text{-tuple}}),~~~ \quad \text{ for }f \in \mathcal{O}(m),\label{eqn-1}\\
\gamma (f ; g_1, \ldots, g_k) =~&   (\cdots ((f \circ_k g_k) \circ_{k-1} g_{k-1}) \cdots ) \circ_1 g_1 , \quad \text{ for }f \in \mathcal{O}(k). \label{eqn-2}
\end{align}

A toy example of an operad is given by the endomorphisms of a vector space.
Let $A$ be a vector space and define $\mathcal{O}(k) = Hom(A^{\otimes k} , A)$, for $k \geq 1$. The compositions $\gamma$ are substitution of the values of $k$ operations in a $k$-ary operation as inputs. 

\medskip

Next, consider the graded vector space $\mathcal{O} = \oplus_{k \geq 1} \mathcal{O}(k)$ of an operad. If $f \in \mathcal{O}(n)$, we define deg $f = n$ and 
 $|f| = n-1$. We use the same notation for any graded vector space as well. Consider the braces
\begin{center}
$\{ f \} \{ g_1, \ldots, g_n\} := \sum (-1)^\epsilon ~ \gamma (f ; \text{id}, \ldots, \text{id}, g_1, \text{id}, \ldots, \text{id}, g_n, \text{id}, \ldots, \text{id})$
\end{center}
where the summation runs over all possible substitutions of $g_1, \ldots, g_n$ into $f$ in the pescribed order and $\epsilon := \sum_{p=1}^{n} |g_p| i_p$, $i_p$ being the total number of inputs in front of $g_p$. The multilinear braces $\{f\} \{ g_1, \ldots, g_n\}$ are homogeneous of degree $-n$. 
Moreover, they satisfy the following identities

\noindent {\sf $\bullet$ Higher pre-Jacobi identities:}
\begin{align*}
&\{f\} \{ g_1, \ldots, g_m\} \{ h_1, \ldots, h_n \}\\
&= \sum_{0 \leq i_1 \leq \cdots \leq i_m \leq n}^{} (-1)^\epsilon ~\{f\} \{ h_1, \ldots, h_{i_1}, \{ g_1 \} \{h_{i_1 +1}, \ldots, h_{j_1} \}, h_{j_1+1}, \ldots, h_{i_m},\\
&\hspace*{8cm}~~~ \{g_m\} \{ h_{i_m +1}, \ldots, h_{j_m} \}, h_{j_m+1}, \ldots, h_n  \},
\end{align*}
where $\epsilon := \sum_{p=1}^{m} \big( |g_p| \sum_{q=1}^{i_p} |h_q|  \big).$

One also assume the following conventions in an operad:
\begin{center}
$\{f\}\{~ \} := f   ~~~\text{ and }~~~ f \circ g := \{f\} \{ g\}.$
\end{center}

\begin{remark}
The higher pre-Jacobi identities implies that
\begin{align}\label{lie-brckt}
[f,g] = f \circ g - (-1)^{|f||g|} g \circ f, ~~~ \text{ for } f , g \in \mathcal{O},
\end{align}
defines a degree $-1$ graded Lie bracket on $\mathcal{O}$.
\end{remark}

\begin{defn}
	A multiplication on an operad $\mathcal{O}$ is an element $m \in \mathcal{O} (2)$ such that $m \circ m = 0$.
\end{defn}

If $m$ is a multiplication on an operad $\mathcal{O}$, then the dot product
\begin{center}
$f \cdot g = (-1)^{|f| + 1} \{ m \} \{ f, g\}, ~~~ f , g \in \mathcal{O},$
\end{center}
defines a graded associative algebra structure on $\mathcal{O}$. Moreover, the degree one map $d : \mathcal{O} \rightarrow \mathcal{O}$, $f \mapsto m \circ f - (-1)^{|f|} f \circ m$ is a differential on $\mathcal{O}$ and the triple $(\mathcal{O}, \cdot, d)$ is a differential graded associative algebra \cite{gers-voro}. Moreover, the following identities are hold:

\noindent {\sf $\bullet$ Distributivity:}
$$\{f \cdot g\} \{ h_1, \ldots, h_n \} = \sum_{k=0}^{n}  (-1)^\epsilon ~ (\{f \}\{ h_1, \ldots, h_k\}) \cdot (\{g\}\{ h_{k+1}, \ldots, h_n \}),~~ \text{ where } \epsilon = |g| \sum_{p=1}^{k} |h_p|.$$


\noindent {\sf $\bullet$ Higher homotopies:}
\begin{align*}
&	d ( \{f\}\{ g_1, \ldots, g_{n+1} \}  ) - \{df\} \{ g_1, \ldots, g_{n+1} \} - (-1)^{|f|} \sum_{i=1}^{n+1} (-1)^{|g_1| + \cdots + |g_{i-1}|} \{f\} \{ g_1, \ldots, dg_i, \ldots, g_{n+1} \}\\
	=& (-1)^{|f||g_1| +1} g_1 \cdot (\{f\} \{ g_2, \ldots, g_{n+1}\}) + (-1)^{|f|} \sum_{i=1}^{n} (-1)^{|g_1| + \cdots+|g_{i-1}|} \{f\} \{ g_1, \ldots, g_i \cdot g_{i+1}, \ldots, g_{n+1} \}  \\
	& - \{f\} \{ g_1, \ldots, g_n \} \cdot g_{n+1}.
\end{align*}

Summarizing the properties of braces and multiplications on an operad, one get the following algebraic structures \cite{gers-voro, getz-jon}.

\begin{defn}
A  brace algebra is a graded vector space $\mathcal{O} = \mathcal{O}(n)$ together with a collection of braces $\{f\} \{g_1, \ldots, g_n \}$ of degree $-n$ satisfying the higher pre-Jacobi identities.
\end{defn}

A brace algebra as above may be denoted by $(\mathcal{O} = \mathcal{O}(n) , ~\{-\}\{-, \ldots, -\})$. 

\begin{defn}
	A  homotopy G-algebra is a brace algebra $(\mathcal{O} = \mathcal{O}(n), ~\{-\}\{-, \ldots, -\})$ endowed with a differential graded associative algebra structure $(\mathcal{O} = \mathcal{O}(n), \cdot, d )$ satisfying the distributivity and higher homotopies. A homotopy $G$-algebra is denoted by $(\mathcal{O} = \mathcal{O}(n), ~\{-\}\{-, \ldots, -\}), \cdot, d)$.
\end{defn}

As a summary, we get the following \cite{gers-voro}.
\begin{thm}\label{gers-voro-thm}
A multiplication on an operad $\mathcal{O}$ defines the structure of a homotopy $G$-algebra on $\mathcal{O} = \oplus \mathcal{O}(n).$
\end{thm}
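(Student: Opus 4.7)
The plan is to verify the four ingredients of a homotopy $G$-algebra structure on $\mathcal{O}$: the brace algebra axioms, the DG associative algebra structure on $(\mathcal{O}, \cdot, d)$, the distributivity law, and the higher homotopy relations. The first of these is automatic: the higher pre-Jacobi identities for the operadic braces follow directly from associativity of the composition $\gamma$, independently of any choice of $m$, as indicated in the paragraphs preceding the theorem.

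For the DG structure, I would first observe that $d(f) = m \circ f - (-1)^{|f|} f \circ m = [m, f]$ in the graded Lie bracket induced by the pre-Jacobi identities. Since $[m, m] = 2(m \circ m) = 0$, graded Jacobi forces $d^2 = 0$. Associativity of $\cdot$ is the key observation: expand $\{m\}\{m\}\{f, g, h\}$ via higher pre-Jacobi, noting that because $m \in \mathcal{O}(2)$ admits only two arguments, the expansion collapses to just $\{m\}\{\{m\}\{f, g\}, h\} + (-1)^{|f|}\{m\}\{f, \{m\}\{g, h\}\}$, which equals $\{m \circ m\}\{f, g, h\} = 0$. Translating via the identity $\{m\}\{F, G\} = (-1)^{|F|+1}\, F \cdot G$ then yields $(f \cdot g) \cdot h = f \cdot (g \cdot h)$. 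The Leibniz rule $d(f \cdot g) = df \cdot g + (-1)^{|f|} f \cdot dg$ follows from a similar combination of higher pre-Jacobi expansions of $\{m\}\{m\}\{f, g\}$ together with the distributivity identity (proved next) at $n = 1$.

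For distributivity, I would expand $\{f \cdot g\}\{h_1, \ldots, h_n\} = (-1)^{|f|+1}\{m\}\{f, g\}\{h_1, \ldots, h_n\}$ using higher pre-Jacobi with outer operation $m$ and inner operations $f, g$. Again because $m$ has only two slots, every surviving term corresponds to a split $0 \leq k \leq n$ placing $\{f\}\{h_1, \ldots, h_k\}$ in the first slot and $\{g\}\{h_{k+1}, \ldots, h_n\}$ in the second, with no direct $h_i$ left outside. Converting back via $\{m\}\{F, G\} = (-1)^{|F|+1}\, F \cdot G$ reproduces the distributivity formula after a short sign computation.

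The main obstacle is the higher homotopy relations, which demand careful combinatorial bookkeeping. My strategy is to expand both $\{m\}\{f\}\{g_1, \ldots, g_{n+1}\}$ and $\{f\}\{m\}\{g_1, \ldots, g_{n+1}\}$ via higher pre-Jacobi. In the first expansion, because $m$ has arity $2$, three families of terms survive: the term $\{m\}\{F\}$ with $F = \{f\}\{g_1, \ldots, g_{n+1}\}$ (the inner $f$ absorbing every $g_i$), together with the two boundary terms $\{m\}\{\{f\}\{g_1, \ldots, g_n\}, g_{n+1}\}$ and $(-1)^{|f||g_1|}\{m\}\{g_1, \{f\}\{g_2, \ldots, g_{n+1}\}\}$, which supply the $\cdot$-products on the right-hand side of the identity. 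In the second expansion, the inner $m$ may absorb zero, one, or two adjacent $g_i$, giving $\{f\}\{\ldots, m, \ldots\}$, $\{f\}\{\ldots, m \circ g_i, \ldots\}$, and $\{f\}\{\ldots, \{m\}\{g_i, g_{i+1}\}, \ldots\}$ terms respectively; the third family encodes the $g_i \cdot g_{i+1}$ contributions. A parallel expansion of $\{F\}\{m\}$ contributes companion $\{f\}\{\ldots, m, \ldots\}$ and $\{f\}\{\ldots, g_i \circ m, \ldots\}$ terms. Substituting $dF = \{m\}\{F\} - (-1)^{|F|}\{F\}\{m\}$ and the analogous expressions for $df$ and $dg_i$ into the higher homotopy identity and collecting, the direct-$m$ terms cancel between the two expansions, the $m \circ g_i$ and $g_i \circ m$ terms recombine into $\{f\}\{\ldots, dg_i, \ldots\}$, and the remainder matches the right-hand side. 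No operadic input beyond higher pre-Jacobi is needed; the entire difficulty is the Koszul sign tracking.
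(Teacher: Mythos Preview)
Your proposal is correct and follows the standard Gerstenhaber--Voronov argument. The paper itself does not prove this theorem: it is stated with the attribution \cite{gers-voro} and presented ``as a summary'' of the identities (higher pre-Jacobi, the DG associative structure coming from $m$, distributivity, higher homotopies) that are simply asserted in the preceding paragraphs. Your write-up supplies exactly the verification of those identities that the paper omits, via repeated application of the higher pre-Jacobi identity together with the arity constraint $m\in\mathcal{O}(2)$, so there is no divergence in approach---only in level of detail.
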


Next, we recall Gerstenhaber algebras ($G$-algebras in short).

\begin{defn}
A (left) Gerstenhaber algebra is a graded commutative associative algebra $(\mathcal{A} = \oplus \mathcal{A}^i, \cdot)$ together with a degree $-1$ graded Lie bracket $[-,-]$ on $\mathcal{A}$ satisfying the following Leibniz rule
$$[a, b \cdot c] = [a, b] \cdot c + (-1)^{|a|(|b| + 1)} b \cdot [a, c],$$
for all homogeneous elements $a, b, c \in \mathcal{A}.$
\end{defn}

\begin{remark}\label{gers-voro-rem}
 Given a homotopy $G$-algebra $(\mathcal{O} = \mathcal{O}(n), ~\{-\}\{-, \ldots, -\}), \cdot, d)$, the product $\cdot$ induces a graded commutative associative product $\cdot$ on the cohomology $H^\bullet (\mathcal{O}, d)$. The degree $-1$ graded Lie bracket as defined in (\ref{lie-brckt}) also passes on to the cohomology $H^\bullet (\mathcal{O}, d)$. Moreover, the induced product and the bracket on the cohomology satisfy the graded Leibniz rule to becomes a Gerstenhaber algebra \cite{gers-voro}.
\end{remark}

\section{Hom-associative algebras}\label{sec3}
In this section, we first recall hom-associative algebras and their Hochschild cohomology.  Then we show that the Hochschild complex of hom-associative algebras carries a natural operad structure together with a multiplication. Finally, we deduce a Gerstenhaber algebra structure on the cohomology.
\begin{defn}\label{hom-ass-defn}
	A hom-associative algebra over $\mathbb{K}$ is a triple $(A, \mu, \alpha)$ consists of a $\mathbb{K}$-vector space $A$ together with a $\mathbb{K}$-bilinear map
	$\mu : A \times A \rightarrow A$ and a $\mathbb{K}$-linear map $\alpha : A \rightarrow A$ satisfying $\alpha (\mu (a,b)) = \mu(\alpha (a), \alpha (b))$ and
	\begin{align}\label{hom-ass-cond}
	\mu ( \alpha (a) , \mu ( b , c) ) = \mu ( \mu (a , b) , \alpha (c)), ~~ \text{ for all } a, b, c \in A.
	\end{align}
\end{defn}

In \cite{amm-ej-makh} the authors called such a hom-associative algebra 'multiplicative'. By a hom-associative algebra, they mean a triple $(A, \mu, \alpha)$ of a vector space $A$, a bilinear map $\mu : A \times A \rightarrow A$ and a linear map $\alpha : A \rightarrow A$ satisfying condition (\ref{hom-ass-cond}). See \cite{amm-ej-makh, makh-sil} for examples of hom-associative algebras.

When $\alpha =$ identity, in any case, one gets the definition of a classical associative algebra.
Next, we recall the definition of Hochschild type cohomology for hom-associative algebras. Like classical case, these cohomology theory controls the deformation of hom-associative algebras \cite{amm-ej-makh}.

Let $(A, \mu, \alpha)$ be a hom-associative algebra. For each $n \geq 1$, we define a $\mathbb{K}$-vector space $C^n_\alpha (A,A) $ consisting of all multilinear maps $f : A^{\otimes n} \rightarrow A$ satisfying $\alpha \circ f = f \circ \alpha^{\otimes n}$, that is,
$$ (\alpha \circ f) (a_1, \ldots, a_n) =  f ( \alpha(a_1), \ldots , \alpha (a_n )), ~~\text{ for all }a_i \in A .$$
Define $\delta_\alpha : C^n_\alpha (A, A) \rightarrow C^{n+1}_\alpha (A, A)$ by the following
\begin{align*}
(\delta_\alpha f) (a_1, a_2, \ldots, a_{n+1}) =~& \mu \big(\alpha^{n-1}(a_1) , f(a_2, \ldots , a_{n+1}) \big) \\
~& + \sum_{i=1}^{n} (-1)^i~ f \big( \alpha(a_1), \ldots, \alpha (a_{i-1}), \mu (a_i , a_{i+1}), \alpha (a_{i+2}), \ldots, \alpha (a_{n+1}) \big) \\
~& + (-1)^{n+1} \mu (f (a_1, \ldots, a_n) , \alpha^{n-1} (a_{n+1})).
\end{align*}
Then we have $\delta^2_\alpha = 0$. The cohomology of this complex is called the Hochschild cohomology of the hom-associative algebra $(A, \mu, \alpha).$ The cohomology groups are denoted by 
$H^n_\alpha (A, A),~ n \geq 2.$ When $\alpha =$ identity, one recovers the classical Hochschild cohomology of associative algebras.

\medskip

\noindent {\bf Operad structure:} Let $A$ be a vector space and $\alpha : A \rightarrow A$ be a linear map. For each $k \geq 1$ define $C^k_\alpha (A, A)$ to be the space of all multilinear maps $f : A^{\otimes k} \rightarrow A$ satisfying
\begin{center}
$	( \alpha \circ f ) (a_1, \ldots, a_k) = f (\alpha (a_1), \ldots, \alpha (a_k)), ~~ \text{ for all } a_i \in A.$
\end{center}
We define an operad structure on $\mathcal{O} = \{ \mathcal{O}(k) |~ k \geq 1 \}$ where $\mathcal{O}(k) = C^k_\alpha (A, A)$, for $k \geq 1$. Define partial compositions $\circ_i : \mathcal{O}(m) \otimes \mathcal{O}(n) \rightarrow \mathcal{O}(m+n-1)$ by
$$(f \circ_i g)(a_1, \ldots, a_{m+n-1}) = f ( \alpha^{n-1}a_1, \ldots, \alpha^{n-1}a_{i-1}, g (a_i, \ldots, a_{i+n-1}), \alpha^{n-1} a_{i+n}, \ldots, \alpha^{n-1} a_{m+n-1}),$$
for $f \in \mathcal{O}(m),~ g \in \mathcal{O}(n)$ and $a_1, \ldots, a_{m+n-1} \in A$. In view of (\ref{eqn-2}), the compositions
\begin{center}
$\gamma_\alpha : \mathcal{O} (k) \otimes \mathcal{O} (n_1) \otimes \cdots \otimes \mathcal{O} (n_k) \rightarrow \mathcal{O} (n_1 + \cdots + n_k)$
\end{center}
are given by
\begin{align*}
&\gamma_\alpha (f; g_1, \ldots, g_k) (a_1, \ldots, a_{n_1 + \cdots + n_k})\\
&=~ f \big( \alpha^{\sum_{l=2}^{k} |g_l|} g_1 (a_1, \ldots, a_{n_1}), \ldots, ~ \alpha^{\sum_{l=1, l \neq i}^{k} |g_l|} g_i (a_{n_1 + \cdots + n_{i-1} +1}, \ldots, a_{n_1 + \cdots+ n_i}) ,\\
& \qquad \qquad \qquad \qquad \qquad \ldots,~ \alpha^{\sum_{l=1}^{k-1} |g_l|} g_k (a_{n_1 + \cdots + n_{k-1} + 1}, \ldots, a_{n_1 + \cdots + n_k}) \big),
\end{align*}
for $f \in \mathcal{O}(k),~ g_i \in \mathcal{O}(n_i)$ and $a_1, \ldots, a_{n_1 + \cdots + n_k} \in A$.

\begin{prop}\label{hom-ass-operad}
	The partial compositions ~$\circ_i$ (or compositions $\gamma_\alpha$) defines a non-$\sum$ operad structure on $C^\bullet_\alpha (A,A)$ with the identity element given by the identity map {\em id} $\in C^1_\alpha (A, A)$.
\end{prop}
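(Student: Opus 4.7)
The plan is to verify the two partial composition axioms and the identity axiom directly from the formulas, with the guiding principle that the equivariance $\alpha \circ f = f \circ \alpha^{\otimes k}$ built into $C^k_\alpha(A,A)$ lets us move $\alpha$'s freely across evaluations.

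As a preliminary, I would check that the partial composition is well-defined, i.e.\ $f \circ_i g \in C^{m+n-1}_\alpha(A,A)$ whenever $f \in C^m_\alpha(A,A)$ and $g \in C^n_\alpha(A,A)$. Applying $\alpha$ to $(f \circ_i g)(a_1, \ldots, a_{m+n-1})$ and pushing it inside $f$ (using the equivariance of $f$) and then inside $g$ (using the equivariance of $g$) promotes each $\alpha^{n-1} a_k$ to $\alpha^n a_k$ and turns $g(a_i, \ldots, a_{i+n-1})$ into $g(\alpha a_i, \ldots, \alpha a_{i+n-1})$, which is exactly $(f \circ_i g)(\alpha a_1, \ldots, \alpha a_{m+n-1})$.

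Next I would verify the nested axiom $(f \circ_i g) \circ_{i+j-1} h = f \circ_i (g \circ_j h)$ for $f \in \mathcal{O}(m)$, $g \in \mathcal{O}(n)$, $h \in \mathcal{O}(p)$. Unfolding the LHS, the outer composition dresses the non-$h$ inputs with $\alpha^{p-1}$, and then $f \circ_i g$ dresses the non-$g$ inputs of $f$ with $\alpha^{n-1}$; since the slot $i+j-1$ lies inside the $g$-block (as $1 \le j \le n$), the $h$-insertion ends up as an argument of $g$, so the net effect is that inputs of $f$ outside its $i$-th slot carry $\alpha^{n+p-2}$, inputs of $g$ outside its $j$-th slot carry $\alpha^{p-1}$, and the inputs of $h$ carry no dressing. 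Unfolding the RHS by first expanding $g \circ_j h$ and then feeding it into slot $i$ of $f$ produces the identical nested expression. The parallel axiom $(f \circ_i g) \circ_{j+n-1} h = (f \circ_j h) \circ_i g$ for $1 \le i < j \le m$ is analogous, with $g$ and $h$ plugged into disjoint slots of $f$; the key point is that on each side, one of the two intermediate steps puts an $\alpha$-power on the outside of an insertion block, which must then be pushed inside via the equivariance relation for $g$ or $h$ so that the two sides produce matching $\alpha$-distributions.

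Finally the identity axiom is immediate: $\text{id}_A \in C^1_\alpha(A,A)$ because $\alpha \circ \text{id}_A = \text{id}_A \circ \alpha$, and plugging $g = \text{id}_A$ into the definition gives $f \circ_i \text{id}_A = f$ since $n = 1$ makes $\alpha^{n-1} = \text{id}$; likewise $\text{id}_A \circ_1 f = f$ trivially. The only real work is the combinatorial accounting of which power of $\alpha$ dresses each input, and the observation that the equivariance constraint is exactly what makes the hom-twisted composition operad-associative — without it, the different orders of insertion would not yield matching $\alpha$-distributions.
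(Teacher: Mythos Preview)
Your proposal is correct and follows the same direct-verification approach as the paper, which writes out the nested axiom $(f\circ_i g)\circ_{i+j-1}h=f\circ_i(g\circ_j h)$ explicitly and then dismisses the parallel axiom and the identity axiom as ``similarly'' and ``easy.'' In fact you are slightly more thorough: you include the well-definedness check $f\circ_i g\in C^{m+n-1}_\alpha(A,A)$ (which the paper omits) and you correctly pinpoint that the $\alpha$-equivariance of $g$ and $h$ is genuinely needed for the parallel axiom, whereas the nested axiom holds by pure substitution.
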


\begin{proof}
For $f \in C^m_\alpha(A, A),~ g \in C^n_\alpha (A, A),~ h \in C^p_\alpha (A,A)$ and $1 \leq i \leq m,~ 1 \leq j \leq n$, we have
\begin{align*}
&((f \circ_i g) \circ_{i+j-1} h)(a_1, \ldots, a_{m+n+p-2}) \\
&= (f \circ_i g) \big(\alpha^{p-1} a_1, \ldots, \alpha^{p-1} a_{i+j-2} ,~ h (a_{i+j-1}, \ldots, a_{i+j+p-2}), \ldots, \alpha^{p-1} a_{m+n+p-2} \big) \\
&= f \big(  \alpha^{n+p-2}a_1, \ldots, \alpha^{n+p-2} a_{i-1}, g \big( \alpha^{p-1}a_i, \ldots, h (a_{i+j-1}, \ldots, a_{i+j+p-2}), \ldots, \alpha^{p-1} a_{i+n+p-2}  \big),\\
& \hspace*{10cm} \ldots, \alpha^{n+p-2} a_{m+n+p-2}    \big) \\
&= f \big(  \alpha^{n+p-2}a_1, \ldots, \alpha^{n+p-2} a_{i-1} , (g \circ_j h) (a_i, \ldots, a_{i+n+p-2}) , \ldots, \alpha^{n+p-2} a_{m+n+p-2} \big) \\
&= (f \circ_i (g \circ_j h)) (a_1, \ldots, a_{m+n+p-2}).
\end{align*}
Similarly, for $1 \leq i < j \leq m$, we have $((f \circ_i g) \circ_{j+n-1} h) = ((f \circ_j h) \circ_i g)$. It is also easy to see that the identity map id is the identity element of the operad. Hence, the proof.
\end{proof}

\begin{remark}
When $\alpha : A \rightarrow A$ is the identity map, one recovers the endomorphism operad on the vector space $A$.
\end{remark}

Note that, the corresponding braces on $C^\bullet_\alpha(A, A)$ are given by
\begin{align*}
\{f\} \{ g_1, \ldots, g_n \} :=& \sum (-1)^\epsilon ~ \gamma_\alpha (f; \text{id}, \ldots, \text{id}, g_1, \text{id}, \ldots, \text{id}, g_n, \text{id}, \ldots, \text{id}).
\end{align*}

Therefore, the degree $-1$ graded Lie bracket on $C^\bullet_\alpha (A, A)$ is given by
\begin{align}\label{grd-lie-ass}
[f , g] = f \circ g - (-1)^{(m-1)(n-1)} g \circ f,
\end{align}
where $$(f \circ g)(a_1, \ldots, a_{m+n-1}) =
\sum_{i=1}^{m} (-1)^{(n-1)(i-1)} f (\alpha^{n-1} a_1, \ldots, g (a_i, \ldots, a_{i+n-1}), \ldots, \alpha^{n-1} a_{m+n-1} ),$$
for $f \in C^m_\alpha (A, A), ~ g \in C^n_\alpha (A, A)$ and $a_1, \ldots, a_{m+n-1} \in A$. See also \cite{amm-ej-makh, das}.

\medskip

Next, let $(A, \mu, \alpha)$ be a hom-associative algebra. Then $\mu \in C^2_\alpha (A, A)$. Moreover, we have
\begin{align*}
\{\mu\} \{ \mu \} (a,b,c) =~& \gamma_\alpha (\mu; \mu, \text{id}) (a,b,c) - \gamma_\alpha (\mu; \text{id}, \mu) (a,b,c) \\
=~& \mu ( \mu(a,b), \alpha (c)) - \mu (\alpha (a), \mu (b,c)) = 0, ~~ \text{ for all } a, b, c \in A.
\end{align*}
Therefore, $\mu$ defines a multiplication on the operad structure on $C^\bullet_\alpha (A,A).$
The corresponding dot product on $C^\bullet_\alpha (A, A)$ is given by
$$ (f \cdot g)(a_1, \ldots, a_{m+n}) = (-1)^{mn} ~\mu (f(\alpha^{n-1}a_1, \ldots, \alpha^{n-1} a_m), g (\alpha^{m-1}a_{m+1}, \ldots, \alpha^{m-1}a_{m+n})  ),$$
for $f \in C^m_\alpha (A, A), ~g \in C^n_\alpha (A,A)$ and $a_1, \ldots, a_{m+n} \in A$. We remark that this dot product on $C^\bullet_\alpha (A,A)$ is same as (up to sign) the cup-product on $C^\bullet_\alpha (A,A)$ defined in \cite{das}. Moreover, the differential $d$ is given by
$$ df = \mu \circ f - (-1)^{|f|} f \circ \mu = (-1)^{|f| + 1} \delta_\alpha (f).$$
The last equality follows from a straightforward calculation \cite{das}.

Thus, in view of Theorem \ref{gers-voro-thm} and Remark \ref{gers-voro-rem}, we get the following.
\begin{thm}
	Let $(A, \mu, \alpha)$ be a hom-associative algebra. Then its Hochschild cochain complex $C^\bullet_\alpha (A, A)$ inherits a homotopy $G$-algebra structure. Hence, its Hochschild cohomology $H^\bullet_\alpha (A, A)$ carries a Gerstenhaber algebra structure.
\end{thm}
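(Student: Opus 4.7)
The plan is to combine the two preparatory facts established earlier in the section with the abstract result of Gerstenhaber and Voronov (Theorem \ref{gers-voro-thm}) and its corollary on cohomology (Remark \ref{gers-voro-rem}). By Proposition \ref{hom-ass-operad} the graded space $C^\bullet_\alpha(A,A)$ already carries a non-$\sum$ operad structure via the twisted partial compositions $\circ_i$, so the only remaining input needed is to exhibit a multiplication $m \in \mathcal{O}(2)$ satisfying $m \circ m = 0$, after which the homotopy $G$-algebra structure is handed to us by the abstract theorem.

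First I would check that the structure map $\mu$ of the hom-associative algebra does belong to $\mathcal{O}(2) = C^2_\alpha(A,A)$; this is precisely the multiplicativity condition $\alpha \circ \mu = \mu \circ \alpha^{\otimes 2}$ built into Definition \ref{hom-ass-defn}. Next I would compute $\mu \circ \mu$ by unwinding the definition of the graded circle product from (\ref{grd-lie-ass}): one finds $(\mu \circ \mu)(a,b,c) = \mu(\alpha(a),\mu(b,c)) - \mu(\mu(a,b),\alpha(c))$, which vanishes by the hom-associativity identity (\ref{hom-ass-cond}). Thus $\mu$ is a multiplication on the operad $C^\bullet_\alpha(A,A)$.

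With these two observations in hand, Theorem \ref{gers-voro-thm} applies directly and endows $C^\bullet_\alpha(A,A)$ with a homotopy $G$-algebra structure whose braces come from the operad of Proposition \ref{hom-ass-operad}, whose associative dot product is $f \cdot g = (-1)^{|f|+1}\{\mu\}\{f,g\}$, and whose differential is $df = \mu \circ f - (-1)^{|f|} f \circ \mu$. To make the conclusion about Hochschild cohomology meaningful, I would next verify that this operadic $d$ agrees (up to a sign) with the Hochschild coboundary $\delta_\alpha$ recalled at the beginning of the section; a direct expansion, already indicated in the paragraph preceding the theorem, gives $df = (-1)^{|f|+1}\delta_\alpha(f)$, so that the operadic cohomology is exactly $H^\bullet_\alpha(A,A)$.

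The passage to cohomology is then automatic via Remark \ref{gers-voro-rem}: the dot product descends to a graded commutative associative product on $H^\bullet_\alpha(A,A)$, the degree $-1$ graded Lie bracket (\ref{grd-lie-ass}) descends as well, and the distributivity together with the higher homotopy identities in the homotopy $G$-algebra forces the graded Leibniz rule on cohomology, yielding a Gerstenhaber algebra. I do not expect any genuine obstacle here: the only real computation is the identity $\mu \circ \mu = 0$, which is a one-line consequence of hom-associativity, and the main bookkeeping step is matching the operadic differential with $\delta_\alpha$; all the substantive work has been absorbed into Proposition \ref{hom-ass-operad} and into the Gerstenhaber--Voronov machinery.
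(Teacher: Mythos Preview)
Your proposal is correct and follows essentially the same route as the paper: establish the non-$\sum$ operad structure via Proposition \ref{hom-ass-operad}, verify that $\mu \in C^2_\alpha(A,A)$ satisfies $\mu \circ \mu = 0$ by hom-associativity, and then invoke Theorem \ref{gers-voro-thm} and Remark \ref{gers-voro-rem}. The only cosmetic difference is that the paper computes $\{\mu\}\{\mu\}$ via the $\gamma_\alpha$-formula rather than the circle product, and your expression for $(\mu\circ\mu)(a,b,c)$ has the opposite overall sign; since the result is zero either way, this is immaterial.
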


\begin{remark}
A direct proof of the existence of a Gerstenhaber algebra structure on the cohomology $H^\bullet_\alpha (A,A)$ has been carried out by the author in \cite{das}. More precisely, the author defined a cup-product 
$\cup_\alpha$ on $C^\bullet_\alpha (A, A)$ by 
\begin{align*}
 (f \cup_\alpha g) (a_1, \ldots, a_{m+n}) =&~ \mu ( f (\alpha^{n-1} a_1, \ldots, \alpha^{n-1} a_m) , g (\alpha^{m-1} a_{m+1}, \ldots, \alpha^{m-1} a_{m+n}) ),
\end{align*}
which is compatible with the Hochschild differential $\delta_\alpha$. Therefore, it induces a cup-product on the cohomology $H^\bullet_\alpha (A,A)$ which turns out to be graded commutative associative. Moreover, the degree $-1$ graded Lie bracket on $C^\bullet_\alpha (A, A)$ as defined in (\ref{grd-lie-ass}) induces a degree $-1$ graded Lie bracket on the cohomology. The induced cup-product and degree $-1$ graded Lie bracket give rise to a (right) Gerstenhaber algebra structure on the cohomology $H^\bullet_\alpha (A, A)$.

The dot product $\cdot$ and the differential $d$ on $C^\bullet_\alpha (A, A)$ induced from the operad structure on $C^\bullet_\alpha (A,A)$ is same as (up to some signs) the cup-product and Hochschild differential on $C^\bullet_\alpha (A,A).$ Due to the presence of signs, we get here (left) Gerstenhaber algebra structure on the cohomology $H^\bullet_\alpha (A,A)$.
\end{remark}

\section{Hom-dialgebras}\label{sec4}

The notion of a (diassociative) dialgebra was introduced by Loday as a generalization of associative algebra and Leibniz algebra \cite{lod}. The hom-analogue of dialgebra is given by the following \cite{yau2}.

\begin{defn}
A hom-dialgebra is a vector space $D$ together with two bilinear maps   $ \dashv, \vdash : D \otimes D \rightarrow D$ and a linear map $\alpha : D \rightarrow D$ satisfying $\alpha (a \dashv b) = \alpha (a) \dashv \alpha (b)$ and $\alpha (a \vdash b) = \alpha (a) \vdash \alpha (b)$  and such that the following axioms hold
\begin{align*}
& \alpha(a) \dashv (b \dashv c) = (a \dashv b) \dashv \alpha(c) = \alpha(a) \dashv (b \vdash c),\\
& (a \vdash b) \dashv \alpha(c) = \alpha(a) \vdash (b \dashv c),\\
& (a \dashv b) \vdash \alpha(c) = \alpha(a) \vdash (b \vdash c) = (a \vdash b ) \vdash \alpha(c), ~~~ \text{ for all } a, b , c \in D.
\end{align*}
\end{defn}
A hom-dialgebra as above is denoted by $(D, \dashv, \vdash, \alpha).$
When $\alpha =$ identity, one get the notion of a dialgebra.
A hom-associative algebra $(A, \mu, \alpha)$ is a hom-dialgebra where $\dashv ~= \mu =~ \vdash$.

Dialgebra cohomology with coefficients was introduced by Frabetti using planar binary trees \cite{frab}. Next, we introduce cohomology of a hom-dialgebra with coefficient in itself. A planar binary tree with $n$-vertices (in short, an $n$-tree) is a planar tree with $(n+1)$ leaves, one root and each vertex trivalent. Let $Y_n$ denote the set of all $n$-trees (see the figure below) and let $Y_0$ be the singleton set consisting\\

\begin{center}
\begin{tikzpicture}[scale=0.2]
 \draw (-6,0) -- (-4,-2);	\draw (0,0) -- (2,-2);    \draw (6,0) -- (8,-2);  \draw (12,0)-- (14,-2); \draw (14,-2) -- (14,-4); \draw (14, -2) -- (16,0); \draw (12.7, - 0.7) -- (13.3333, 0) ; \draw (13.33, -1.33) -- (14.66, 0);
\draw (-4,-2)-- (-2,0);	\draw (2,-2) -- (4,0);    \draw (8,-2) -- (10,0); \draw (18,0) -- (20,-2); \draw (20, -2) -- (20, -4); \draw (20, -2) -- (22,0); \draw (19.33, -1.33) -- (20.66, 0); \draw (19.33, 0) -- (20, -0.66); \draw (24,0)-- (26,-2); \draw (26,-2)-- (26,-4); \draw (26,-2) -- (28,0); \draw (25.33, 0) -- (24.66, -0.66); \draw (26.66, 0) -- (27.34, -0.66); \draw (30,0) -- (32,-2); \draw (32, -2) -- (32, -4); \draw (32,-2) -- (34, 0); \draw (31.33, 0) -- (32.67 , -1.33) ; \draw (32.66, 0) -- (32, -0.66);
\draw (-4,-2) -- (-4,-4);	\draw (2,-2) -- (2,-4);    \draw (8,-2) -- (8,-4); \draw (36,0) -- (38, -2) ; \draw (38, -2) -- (38, -4) ; \draw (38, -2) -- (40, 0); \draw (37.33, 0) -- ( 38.67, - 1.33) ; \draw (38.66, 0) -- (39.34, -0.66);
\draw (-8,0) -- (-8,-4);	\draw (1,-1) -- (2,0);     \draw (9,-1) -- ( 8,0);
\end{tikzpicture}
\end{center}

\noindent of a root only. Therefore, in the above trees, $Y_0$ consists of the first tree, $Y_1$ consists of the second tree, $Y_2$ consists of the third and the fourth tree, $Y_3$ consists of the rest of the trees.

For each $y \in Y_n$, the $(n+1)$ leaves are labelled by $\{  0, 1, \ldots, n\}$ from left to right, the vertices are labelled by $\{ 1, \ldots, n\}$ so that the $i$-th vertex is between the leaves $(i-1)$ and $i$. The only element in $Y_0$ is denoted by $[0]$ and the only element in $Y_1$ is denoted by $[1]$. The grafting of a $p$-tree $y_1$ and $q$-tree $y_2$ is a $(p+q+1)$-tree denoted by $y_1 \vee y_2$ which is obtained by joining the roots of $y_1$ and $y_2$ and creating a new root from that vertex. This is denoted by $[y_1 \quad p+q+1 \quad y_2]$ with the convention that all zeros are deleted except for the element in $Y_0$. With this notation, the trees in the above (from left to right) are $[0],~[1],~[12],~[21],~[123],~[213],~[131],~[312],~[321].$

For any fixed $n\geq 1$, there are maps $d_i : Y_n \rightarrow Y_{n-1}~ (0 \leq i \leq n)$, $y \mapsto d_i y,$ where $d_iy$ is obtained from $y$ by deleting the $i$-th leaf. These maps are called face maps and satisfy the relations $d_id_j = d_{j-1}d_i$, for all $i < j$.

Before we introduce the cohomology of a hom-dialgebra, we need the following notations. For any $0 \leq i \leq n+1$, the maps $\bullet_i : Y_{n+1} \rightarrow \{ \dashv, \vdash\}$ are defined by

$$ \bullet_0 (y) = \bullet_0^y := \begin{cases} \dashv & \mbox{ if } y \mbox{ is of the form } | \vee y_1 \mbox{for some } n\mbox{-tree } y_1,\\
\vdash & \mbox{ otherwise, }
\end{cases} $$

$$ \bullet_i (y) = \bullet_i^y := \begin{cases} \dashv & \mbox{ if the } i^{th} \mbox{ leaf of } y \mbox{ is oriented like } '\backslash',\\
\vdash & \mbox{ if the } i^{th} \mbox{ leaf of } y \mbox{ is oriented like } '\slash',
\end{cases} $$
for $1 \leq i \leq n$, and
$$ \bullet_{n+1} (y) = \bullet_{n+1}^y := \begin{cases} \vdash & \mbox{ if } y \mbox{ is of the form }  y_1 \vee | , \mbox{for some } n\mbox{-tree } y_1,\\
\dashv & \mbox{ otherwise. }
\end{cases} $$

Let $(D, \dashv, \vdash, \alpha)$ be a hom-dialgebra. For any $n \geq 1$, the cochain group $CY^n_\alpha (D, D)$ consists of all linear maps
\begin{center}
	$ f : K [Y_n] \otimes D^{\otimes n} \rightarrow D, ~ ~ y \otimes a_1 \otimes \cdots \otimes a_n \mapsto f (y; a_1, \ldots, a_n)$
\end{center}
satisfying
\begin{center}
	$(\alpha \circ f) (y; a_1, \ldots, a_n)= f (y ; \alpha (a_1), \ldots, \alpha (a_n))$, ~~~~ for all $y \in Y_n$, $a_i \in D$.
\end{center}
The coboundary map $\delta_\alpha : CY^n_\alpha(D, D) \rightarrow CY^{n+1}_\alpha (D, D)$ defined by
\begin{align*}
(\delta_\alpha f ) (y ; a_1, \ldots, a_{n+1}) =~& \alpha^{n-1} (a_1) \bullet^y_0 f (d_0 y ; a_2, \ldots, a_{n+1}) \\
&+ \sum_{i=1}^{n} (-1)^i~f ( d_iy ;~ \alpha (a_1), \ldots, a_i \bullet_i^y a_{i+1}, \ldots, \alpha (a_{n+1})   ) \\
&+ (-1)^{n+1} f (d_{n+1} y ;~ a_1, \ldots, a_n) \bullet^y_{n+1} \alpha^{n-1} (a_{n+1}),
\end{align*}
for $y \in Y_{n+1}$ and $a_1, \ldots, a_{n+1} \in D.$ Similar to the hom-associative case \cite{amm-ej-makh}, one can prove the following.

\begin{prop}
	The coboundary map satisfies $\delta_\alpha^2 = 0$.
\end{prop}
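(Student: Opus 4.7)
The plan is to follow the strategy used by Frabetti in the classical dialgebra case and by Ammar--Ejbehi--Makhlouf \cite{amm-ej-makh} in the hom-associative case. Fix $y \in Y_{n+2}$ and $a_1, \ldots, a_{n+2} \in D$, and expand $(\delta_\alpha^2 f)(y; a_1, \ldots, a_{n+2})$ as a double sum over pairs $(i, j)$ with $0 \leq i \leq n+2$ and $0 \leq j \leq n+1$, where the outer $\delta_\alpha$ is indexed by $i$ and the inner by $j$. I would partition the resulting terms into three bundles: (I) \emph{inner--inner} terms, where neither index is extremal; (II) \emph{mixed} terms, where exactly one of $i, j$ is extremal; and (III) \emph{outer--outer} terms, where both indices are extremal.

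Bundle (I) should cancel term-by-term via the face-map identity $d_i d_j = d_{j-1} d_i$ for $i < j$, paralleling the classical computation. The $\alpha$-twists line up because $\alpha \circ f = f \circ \alpha^{\otimes n}$ and $\alpha^{n-1} \circ \alpha = \alpha^n$, while the orientation markers match because deleting a leaf other than the $i$-th does not alter the orientation of the $i$-th leaf. Bundle (II) cancels using the multiplicativity identities $\alpha(a \dashv b) = \alpha(a) \dashv \alpha(b)$ and $\alpha(a \vdash b) = \alpha(a) \vdash \alpha(b)$ to pull $\alpha$ through an internal product and across the cochain $f$, so that each extremal term on one side matches an internal term on the other.

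Bundle (III) is where the five hom-dialgebra axioms enter. The extremal pairs produce nested products of the form $\alpha^n(a_1) \bullet_0^y \bigl(a_2 \bullet_1^{d_0 y} f(\cdots)\bigr)$, $(a_1 \bullet_1^y a_2) \bullet_0^{d_1 y} \alpha^n(\cdots)$, and their right-end analogues involving $\bullet_{n+2}^y$ and $\bullet_{n+1}^{d_{n+2}y}$. For each shape of $y$ the markers $\bullet_0^y$ and $\bullet_{n+2}^y$ together with the extremal leaf orientations prescribe which axiom is needed; the three-term axioms $\alpha(a) \dashv (b \dashv c) = (a \dashv b) \dashv \alpha(c) = \alpha(a) \dashv (b \vdash c)$ and $(a \dashv b) \vdash \alpha(c) = \alpha(a) \vdash (b \vdash c) = (a \vdash b) \vdash \alpha(c)$ then furnish the required triple cancellations when $y$ is of the form $| \vee y'$ or $y' \vee |$, while the remaining axiom $(a \vdash b) \dashv \alpha(c) = \alpha(a) \vdash (b \dashv c)$ handles the generic intermediate cases. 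The main obstacle I anticipate is precisely this bookkeeping of orientation markers across face maps: one must check case-by-case that after deleting a leaf the remaining orientations invoke the correct axiom. Beyond this case analysis, the argument introduces no structural ingredients beyond Frabetti's classical calculation and the $\alpha$-twists already handled in the hom-associative setting.
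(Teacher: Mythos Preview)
Your proposal is correct and is exactly the direct verification the paper has in mind: the paper gives no proof at all here, merely remarking ``Similar to the hom-associative case \cite{amm-ej-makh}, one can prove the following'' before stating the proposition. Your three-bundle decomposition (face-map relations $d_i d_j = d_{j-1} d_i$ for inner--inner terms, multiplicativity of $\alpha$ for the mixed terms, and the five hom-dialgebra axioms for the outer--outer terms) is precisely the expected computation combining Frabetti's dialgebra argument with the $\alpha$-twist from \cite{amm-ej-makh}.
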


The cohomology of the complex $(CY^\bullet_\alpha (D,D), \delta_\alpha)$ is called the cohomology of the hom-dialgebra $(D, \dashv, \vdash, \alpha)$ and the cohomology groups are denoted by $HY^n_\alpha (D, D)$, for $n \geq 2$.

\begin{remark}
When $(D, \dashv, \vdash , \alpha)$ is a dialgebra, that is, $\alpha = id$, one recovers the known dialgebra cohomology \cite{lod}. When $(D, \dashv, \vdash , \alpha)$ is a hom-associative algebra, that is, $\dashv ~= \mu =~ \vdash $, one recovers the cohomology of hom-associative algebra.
\end{remark}

We show that the cochain groups $CY^\bullet_\alpha (D,D)$ carries a homotopy $G$-algebra structure. Hence, the cohomology $HY^\bullet_\alpha (D,D)$ inherits a Gerstenhaber algebra structure.

\medskip

\noindent {\bf Operad structure:} Let $D$ be a vector space and $\alpha : D \rightarrow D$ be a linear map. For each $k \geq 1$ define $CY^k_\alpha (D,D)$ to be the space of all multilinear maps $f : K[Y_k] \otimes D^{\otimes k} \rightarrow D$ satisfying
$$ (\alpha \circ f) (y; a_1, \ldots, a_k) = f (y ; \alpha(a_1), \ldots, \alpha (a_k)), ~~ \text{ for all } y \in Y_k \text{ and } a_i \in D.$$
Our aim is to define an operad structure on $\mathcal{O} = \{  \mathcal{O}(k)|~ k \geq 1 \}$ where $\mathcal{O}(k) = CY^k_\alpha (D,D)$, for $k \geq 1$. For this, we closely follow \cite{maj-muk}. 

For any $k, n_1, \ldots, n_k \geq 1$, we define maps
$\mathcal{R}_0 (k; n_1, \ldots, n_k) : Y_{n_1 + \cdots + n_k} \rightarrow Y_k$
by
$$\mathcal{R}_0 (k; n_1, \ldots, n_k) := d_1 \cdots d_{n_1 -1} d_{n_1 +1} \cdots d_{n_1 + n_2 -1} d_{n_1 + n_2 + 1} \cdots d_{n_1 + \cdots + n_{k-1} -1} d_{n_1 + \cdots + n_{k-1} + 1} \cdots d_{n_1 + \cdots n_k -1}.$$
Moreover, for any $1 \leq i \leq k$, there are maps
$ \mathcal{R}_i (k; n_1, \ldots, n_k) : Y_{n_1 + \cdots + n_k} \rightarrow Y_{n_i}$
defined by
$$\mathcal{R}_i (k; n_1, \ldots, n_k) := d_0 d_1 \cdots d_{n_1 + \cdots + n_{i-1} -1}  d_{n_1 + \cdots + n_{i} + 1} \cdots d_{n_1 + \cdots + n_k}.$$
In other words, the function $\mathcal{R}_0 (k; n_1, \ldots, n_k)$ misses $d_0,~ d_{n_1},~ d_{n_1 + n_2}, \ldots, d_{n_1 + \cdots + n_k}$ and the function $\mathcal{R}_i (k; n_1, \ldots, n_k)$ misses $d_{n_1 + \cdots + n_{i-1}},~ d_{n_1 + \cdots + n_{i-1} + 1}, \ldots, d_{n_1 + \cdots + n_i}.$

Then the collection
$$\mathcal{R} = \{ \mathcal{R}_0 (k; n_1, \ldots, n_k) ,~ \mathcal{R}_i (k; n_1, \ldots, n_k) |~ k, n_1, \ldots, n_k \geq 1 \text{ and } 1 \leq i \leq k  \}$$
satisfy the following relations of a pre-operadic system \cite{yau}:

\medskip

\noindent $\bullet$ $\mathcal{R}_0 (k; \underbrace{1, \ldots, 1}_{k \text{ times}}) = id_{Y_k}, ~~ \text{ for each } k \geq 1,$

\medskip

\noindent $\bullet$ $\mathcal{R}_0 (k; n_1, \ldots, n_k) \mathcal{R}_0 (n_1 + \cdots + n_k; ~m_1, \ldots, m_{n_1 + \cdots + n_k}) \\
\hspace*{1cm} = ~\mathcal{R}_0 (k; ~ m_1 + \cdots + m_{n_1}, ~ m_{n_1 + 1} + \cdots + m_{n_1 + n_2}, \ldots, ~m_{n_1 + \cdots + n_{k-1} + 1} + \cdots + m_{n_1 + \cdots + n_k}),$

\medskip

\noindent $\bullet$ $\mathcal{R}_i (k; n_1, \ldots, n_k) \mathcal{R}_0 (n_1 + \cdots + n_k; ~m_1, \ldots, m_{n_1 + \cdots + n_k}) 
= \mathcal{R}_0 ( n_i; m_{n_1 + \cdots + n_{i-1} + 1}, \ldots, m_{n_1 + \cdots + n_i}) \\
\hspace*{1cm} \mathcal{R}_i (k; ~ m_1 + \cdots + m_{n_1}, ~ m_{n_1 + 1} + \cdots + m_{n_1 + n_2}~, \ldots, m_{n_1 + \cdots + n_{k-1} + 1} + \cdots + m_{n_1 + \cdots + n_k}),$

\medskip

\noindent $\bullet$ $\mathcal{R}_{n_1 + \cdots + n_{i-1} + j } (n_1 + \cdots + n_k;~ m_1 , \ldots, m_{n_1 + \cdots + n_k}) = \mathcal{R}_j (n_i; m_{n_1+ \cdots + n_{i-1} + 1}, \ldots, m_{n_1 + \cdots + n_i}) \\
\hspace*{1cm} \mathcal{R}_i (k; ~ m_1 + \cdots + m_{n_1}, ~ m_{n_1 + 1} + \cdots + m_{n_1 + n_2}~, \ldots, m_{n_1 + \cdots + n_{k-1} + 1} + \cdots + m_{n_1 + \cdots + n_k}),$

\medskip

\noindent for any $m_1, \ldots, m_{n_1 + \cdots + n_k} \geq 1$.

Now, we are in a position to define an operad structure on $\mathcal{O}$. Define partial compositions $\circ_i : \mathcal{O}(m) \otimes \mathcal{O}(n) \rightarrow \mathcal{O}(m+n-1)$ by
\begin{align*}
(f \circ_i g) (y; a_1, &\ldots, a_{m+n-1}) = f \bigg( \mathcal{R}_0 (m; \overbrace{1, \ldots, 1, \underbrace{n}_{i\text{-th place}}, 1, \ldots, 1}^{m\text{-tuple}})y ;~ \alpha^{n-1}a_1, \ldots, \alpha^{n-1}a_{i-1},\\ 
& g \big( \mathcal{R}_i (m; \overbrace{1, \ldots, 1, \underbrace{n}_{i\text{-th place}}, 1, \ldots, 1}^{m\text{-tuple}})y ;~ a_i, \ldots, a_{i+n-1} \big), \alpha^{n-1}a_{i+n}, \ldots, \alpha^{n-1} a_{m+n-1}   \bigg),
\end{align*}
for $f \in CY^m_\alpha (D,D),~ g \in CY^n_\alpha (D,D), ~ y \in Y_{m+n-1}$ and $a_1, \ldots, a_{m+n-1} \in D.$ Therefore, by using (\ref{eqn-2}) and the pre-operadic identities, it follows that the compositions
$$\gamma_\alpha : \mathcal{O}(k) \otimes \mathcal{O}(n_1) \otimes \cdots \otimes \mathcal{O}(n_k) \rightarrow \mathcal{O}(n_1 + \cdots + n_k)$$
are given by
\begin{align*}
 \gamma_\alpha (f ; g_1 , \ldots , g_k) (y ; a_1, \ldots,& a_{n_1 + \cdots + n_k}) \\
 = f \big(   \mathcal{R}_0 (k; n_1, \ldots, n_k) y ;~ 
& \alpha^{\sum_{l=2}^{k} |g_l|}~ g_1 \big( \mathcal{R}_1 (k; n_1, \ldots, n_k)y ;~ a_1, \ldots, a_{n_1} \big), \ldots,\\
&  \alpha^{\sum_{l=1, l \neq i}^{k} |g_l|}~ g_i \big( \mathcal{R}_i (k; n_1, \ldots, n_k)y ;~ a_{n_1 + \cdots + n_{i-1} + 1}, \ldots, a_{n_1 + \cdots + n_i} \big), \ldots,\\
&   \alpha^{\sum_{l=1}^{k-1} |g_l|}~ g_k \big( \mathcal{R}_k (k; n_1, \ldots, n_k)y ;~ a_{n_1 + \cdots + n_{k-1} + 1}, \ldots, a_{n_1 + \ldots + n_k}\big)  \big),
\end{align*}
for all $y \in Y_{n_1 + \cdots + n_k}$ and $a_1, a_2, \ldots, a_{n_1 + \cdots + n_k} \in D.$

We also consider the identity map id $\in CY^1_\alpha (D,D)$ defined by  id$ ([1]; a) = a$, for all $a \in D$.

Using the pre-operadic identities of $\mathcal{R}$, we can prove the following. The proof is similar to Proposition \ref{hom-ass-operad}, hence, we omit the details.

\begin{prop}
The partial compositions $\circ_i$ (or compositions $\gamma_\alpha$) defines a non-$\sum$ operad structure on $CY^\bullet_\alpha (D, D)$ with the identity element given by the identity map {\em id} $\in CY^1_\alpha (D,D)$.
\end{prop}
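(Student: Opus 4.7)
The plan is to verify the two associativity axioms for the partial compositions together with the identity axioms, mimicking the proof of Proposition \ref{hom-ass-operad} but with an extra layer of bookkeeping for the tree inputs. Explicitly, for $f \in CY^m_\alpha(D,D)$, $g \in CY^n_\alpha(D,D)$, $h \in CY^p_\alpha(D,D)$, I will expand both sides of the sequential identity $(f \circ_i g) \circ_{i+j-1} h = f \circ_i (g \circ_j h)$ (for $1 \leq i \leq m$, $1 \leq j \leq n$) and of the parallel identity $(f \circ_i g) \circ_{j+n-1} h = (f \circ_j h) \circ_i g$ (for $1 \leq i < j \leq m$) directly from the definition of $\circ_i$, and compare term by term.

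On each side one obtains an expression of the form $f(T_f;\, \ldots,\, g(T_g;\, \ldots,\, h(T_h;\, \ldots),\, \ldots),\, \ldots)$, where $T_f, T_g, T_h$ are trees produced by composing $\mathcal{R}$-maps and the remaining element slots carry various powers of $\alpha$. The element-level comparison is essentially the computation of Proposition \ref{hom-ass-operad}: each untouched slot acquires exactly one extra $\alpha$ per element-leaf introduced further inside the composition, and the exponents on the two sides agree by the same counting. Thus the only genuinely new step is to verify that the trees $T_f, T_g, T_h$ coincide on both sides of each axiom.

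This is exactly where the pre-operadic identities of $\mathcal{R}$ listed just before the proposition come in. Specialising them to size-tuples of the form $(1,\ldots,1,n,1,\ldots,1)$ and $(1,\ldots,1,p,1,\ldots,1)$ collapses each nested composition of $\mathcal{R}$-maps on the left-hand side to the single $\mathcal{R}$-map appearing on the right-hand side (or vice versa). Concretely, for the sequential axiom, the tree $T_f$ is matched by the $\mathcal{R}_0\!\circ\!\mathcal{R}_0$ identity, the tree $T_g$ by the $\mathcal{R}_i\!\circ\!\mathcal{R}_0$ identity together with the normalisation $\mathcal{R}_0(k;1,\ldots,1) = \mathrm{id}_{Y_k}$, and the tree $T_h$ by the identity expressing $\mathcal{R}_{n_1+\cdots+n_{i-1}+j}$ in terms of $\mathcal{R}_j \circ \mathcal{R}_i$; the parallel axiom is handled analogously. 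The identity axioms $f \circ_i \mathrm{id} = f = \mathrm{id} \circ_1 f$ follow at once from $\mathcal{R}_0(k;1,\ldots,1) = \mathrm{id}_{Y_k}$, $\alpha^0 = \mathrm{id}_D$, and $\mathrm{id}([1];a) = a$. The main obstacle I anticipate is purely notational: the indices $i,j$ shift under each partial composition, so aligning which $\mathcal{R}$-map is applied with which size-tuple on each side requires care; but once this alignment is made, the pre-operadic identities close the argument mechanically, in direct parallel with Proposition \ref{hom-ass-operad}.
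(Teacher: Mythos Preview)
Your proposal is correct and follows exactly the approach the paper indicates: the paper omits the details entirely, stating only that the proof is similar to Proposition~\ref{hom-ass-operad} and uses the pre-operadic identities of $\mathcal{R}$. Your plan makes this explicit by separating the element-level $\alpha$-bookkeeping (handled as in Proposition~\ref{hom-ass-operad}) from the tree-level comparison (handled by the pre-operadic identities specialised to tuples of the form $(1,\ldots,1,n,1,\ldots,1)$), which is precisely the intended argument.
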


\begin{remark}
When $\alpha : D \rightarrow D$ is the identity map (dialgebra case), one recovers the operad considered in \cite{maj-muk}.
\end{remark}

Note that, the corresponding braces are given by
\begin{align*}
\{f\} \{ g_1, \ldots, g_n \} :=& \sum (-1)^\epsilon ~ \gamma_\alpha (f; \text{id}, \ldots, \text{id}, g_1, \text{id}, \ldots, \text{id}, g_n, \text{id}, \ldots, \text{id}).
\end{align*}

The degree $-1$ graded Lie bracket on $CY^\bullet_\alpha (D,D)$ is given by
$$[f,g] = f \circ g - (-1)^{(m-1)(n-1)} g \circ f,$$
where
\begin{align*}
&(f \circ g) (y; a_1, \ldots, a_{m+n-1})\\
&= \sum_{i=1}^{m} (-1)^{(i-1)(n-1)} f \bigg( \mathcal{R}_0 (m; \overbrace{1, \ldots, 1, \underbrace{n}_{i\text{-th place}}, 1, \ldots, 1}^{m\text{-tuple}})y ;~ \alpha^{n-1}a_1, \ldots, \alpha^{n-1}a_{i-1},\\ 
&\qquad \qquad \qquad g \big( \mathcal{R}_i (m; \overbrace{1, \ldots, 1, \underbrace{n}_{i\text{-th place}}, 1, \ldots, 1}^{m\text{-tuple}})y ;~ a_i, \ldots, a_{i+n-1} \big), \alpha^{n-1}a_{i+n}, \ldots, \alpha^{n-1} a_{m+n-1}   \bigg),
\end{align*}
for $f \in CY^m_\alpha (D,D),~ g \in CY^n_\alpha (D,D)$ and $a_1, \ldots, a_{m+n-1} \in D.$

\medskip

Next, let $(D, \dashv, \vdash, \alpha)$ be a hom-dialgebra. Consider the operad structure on $CY^\bullet_\alpha (D,D)$ as defined above. Define an element $\pi \in CY^2_\alpha (D, D)$ by the following
$$ \pi (y; a, b) := \begin{cases} a \dashv b & \mbox{ if } y = [21],\\
a \vdash b & \mbox{ if } y = [12],
\end{cases} $$
for all $a, b \in D$. An easy calculation shows that
\begin{align*}
 \{ \pi \} \{ \pi \} (y; a, b, c) = \begin{cases} (a \vdash b) \vdash \alpha (c) ~-~ \alpha(a) \vdash (b \vdash c) & \mbox{ if } y = [123],\\
(a \dashv b) \vdash \alpha (c) ~-~ \alpha (a) \vdash (b \vdash c) & \mbox{ if } y = [213], \\
 (a \vdash b) \dashv \alpha (c) ~-~ \alpha (a) \vdash (b \dashv c)  & \mbox{ if } y = [131], \\
 (a \dashv b) \dashv \alpha (c) ~-~ \alpha (a) \dashv (b \vdash c)  & \mbox{ if } y = [312], \\
 (a \dashv b) \dashv \alpha (c) ~-~ \alpha (a) \dashv (b \dashv c)  & \mbox{ if } y = [321].\\
\end{cases}
\end{align*}

Hence, it follows from the hom-dialgebra condition that $ \{ \pi \} \{ \pi \} (y; a, b, c) = 0$, for all $y \in Y_3$ and $a, b, c \in D$.
Therefore, $\pi$ defines a multiplication on the operad $CY^\bullet_\alpha (D,D)$.
The corresponding dot product on $CY^\bullet_\alpha(D,D)$ is given by
\begin{align*}
&(f \cdot g)(y; a_1, \ldots, a_{m+n}) = (-1)^{m}~ \{\pi\} \{f,g\} (y; a_1, \ldots, a_{m+n}) \\
& = (-1)^{mn} ~\pi \big( \mathcal{R}_0 (2;m,n) y ; \alpha^{n-1} f (\mathcal{R}_1 (2;m,n) y ; a_1, \ldots, a_m) , \alpha^{m-1} g (\mathcal{R}_2 (2;m,n) y; a_{m+1}, \ldots, a_{m+n}) \big),
\end{align*}
for $f \in CY^m_\alpha (D,D),~ g \in CY^n_\alpha (D,D), ~ y \in Y_{m+n}$ and $a_1, \ldots, a_{m+n} \in D.$
Like hom-associative case, the differential here is given by
$$df = (-1)^{|f|+1} \delta_\alpha (f).$$

Thus, in view of Theorem \ref{gers-voro-thm} and Remark \ref{gers-voro-rem}, we get the following.
\begin{thm}
	Let $(D, \dashv, \vdash , \alpha)$ be a hom-dialgebra. Then the cochain complex $CY^\bullet_\alpha (D, D)$ inherits a homotopy $G$-algebra structure. Hence, its cohomology $HY^\bullet_\alpha (D, D)$ carries a Gerstenhaber algebra structure.
\end{thm}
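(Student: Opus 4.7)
The plan is to invoke the Gerstenhaber--Voronov machinery (Theorem \ref{gers-voro-thm}) combined with Remark \ref{gers-voro-rem}. The necessary structural ingredients have already been assembled in the excerpt: (i) the non-$\sum$ operad structure on $CY^\bullet_\alpha(D,D)$ provided by the partial compositions $\circ_i$ built from the pre-operadic system $\mathcal{R}$; (ii) the distinguished element $\pi \in CY^2_\alpha(D,D)$ encoding the two products $\dashv,\vdash$ via the two trees $[21]$ and $[12]$ of $Y_2$; and (iii) the identification of the operadic differential $d f = \pi \circ f - (-1)^{|f|} f \circ \pi$ with $(-1)^{|f|+1}\delta_\alpha(f)$.

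First I would confirm that $\pi$ is a multiplication on the operad, i.e.\ $\{\pi\}\{\pi\} = \pi \circ \pi = 0$. This is accomplished by a case analysis on $y \in Y_3$: for each of the five trees, the expression $\{\pi\}\{\pi\}(y;a,b,c)$ evaluates to one of the five two-term combinations of $\dashv$ and $\vdash$ displayed in the excerpt, and each such combination vanishes by exactly one of the hom-dialgebra axioms. The correspondence is forced by the tree labels $\bullet_i^y$, which determine whether the relevant sub-tree $\mathcal{R}_j(2;n_1,n_2)y$ equals $[12]$ or $[21]$, hence whether $\pi$ evaluated on that sub-tree returns $\vdash$ or $\dashv$.

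With $\pi$ identified as a multiplication, Theorem \ref{gers-voro-thm} immediately yields a homotopy $G$-algebra structure on $\mathcal{O} = \bigoplus_{k \geq 1} CY^k_\alpha(D,D)$: the braces are the ones written above, the dot product is the explicit $(f \cdot g)$ built from $\pi$, and the differential is $(-1)^{|f|+1}\delta_\alpha$. Passing to cohomology, Remark \ref{gers-voro-rem} then produces the Gerstenhaber algebra structure on $HY^\bullet_\alpha(D,D)$: the induced dot product is graded commutative and associative, the induced bracket (as in \eqref{lie-brckt}) is a degree $-1$ graded Lie bracket, and the graded Leibniz rule follows from the distributivity and higher-homotopy identities of the homotopy $G$-algebra.

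The main obstacle is the tree-label bookkeeping. One must verify that $\{\pi\}\{\pi\}(y;a,b,c)$ really does produce the five displayed expressions by correctly tracking $\mathcal{R}_0(2;2,1)y$, $\mathcal{R}_0(2;1,2)y$, $\mathcal{R}_1(2;2,1)y$ and $\mathcal{R}_2(2;1,2)y$ for each $y \in Y_3$, and similarly that the operadic differential $df$ coincides with $(-1)^{|f|+1}\delta_\alpha f$ on every tree $y \in Y_{n+1}$, which demands matching each label $\bullet_i^y$ with whether the corresponding $\mathcal{R}$-image is $[12]$ or $[21]$. This is the same kind of computation already carried out for dialgebras in \cite{maj-muk}, twisted at each step by appropriate powers of $\alpha$; once it is done, the remainder of the proof is a direct citation of Theorem \ref{gers-voro-thm} and Remark \ref{gers-voro-rem}.
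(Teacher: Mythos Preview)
Your proposal is correct and follows exactly the paper's approach: the theorem is obtained as an immediate consequence of Theorem~\ref{gers-voro-thm} and Remark~\ref{gers-voro-rem}, once the non-$\sum$ operad structure on $CY^\bullet_\alpha(D,D)$ is in place, the element $\pi$ is verified to be a multiplication via the five-tree case analysis, and the operadic differential is identified with $(-1)^{|f|+1}\delta_\alpha$. The paper presents no additional argument beyond this.
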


\medskip

\noindent {\bf Acknowledgement.} The author would like to thank the referee for his/her comments on the earlier version of the manuscript. The research was supported by the Institute post-doctoral fellowship of Indian Statistical Institute Kolkata. The author would like to thank the Institute for their support.


\begin{thebibliography}{BFGM03}

\bibitem{amm-ej-makh}
F. Ammar, Z. Ejbehi and A. Makhlouf, Cohomology and Deformations of Hom-algebras, {\em J. Lie Theory} 21 (2011), no. 4, 813-836. 

\bibitem{das}
A. Das, Gerstenhaber algebra structure on the cohomology of a hom-associative algebra, arXiv:1805.01207

\bibitem{frab}
A. Frabetti, Dialgebra (co)homology with coefficients, {\it Dialgebras and related operads}, 67-103, 
Lecture Notes in Math., 1763, {\it Springer, Berlin}, 2001. 

\bibitem{gers}
M. Gerstenhaber, The cohomology structure of an associative ring, {\em Ann. of Math.} 78 (1963) 267-288.

\bibitem{gers-voro}
M. Gerstenhaber and A. A. Voronov, Homotopy G-algebras and moduli space operad, {\em Internat. Math. Res. Notices} 1995, no. 3, 141-153.

\bibitem{getz-jon}
E. Getzler and J. D. S. Jones, Operads, homotopy algebra and iterated integrals for double loop spaces, preprint hep-th/9403055

\bibitem{hls}
J. T. Hartwig, D. Larsson and S. D. Silvestrov, Deformations of Lie algebras using $\sigma$-derivations, {\em J. Algebra} 295 (2006), 314-361.

\bibitem{lod}
J.-L. Loday, A. Frabetti, F. Chapoton and F. Goichot, Dialgebras, {\it Dialgebras and related operads,} 7-66, Lecture Notes in Math., 1763, {\it Springer, Berlin}, 2001.

\bibitem{maj-muk}
A. Majumder and G. Mukherjee, Dialgebra cohomology as a $G$-algebra, {\em Trans. Amer. Math. Soc.} 356 (2004), no. 6, 2443-2457.

\bibitem{makh-sil}
A. Makhlouf and S. Silvestrov, Hom-algebra structures, {\em J. Gen. Lie Theory Appl.} 2 (2008), no. 2, 51-64. 

\bibitem{makh-sil2}
A. Makhlouf and S. Silvestrov, Notes on 1-parameter formal deformations of Hom-associative and Hom-Lie algebras, {\em Forum Math.} 22 (2010), no. 4, 715-739.

\bibitem{yau}
D. Yau, Gerstenhaber structure and Deligne's conjecture for Loday algebras, {\em J. Pure Appl. Algebra} 209 (2007), no. 3, 739-752.

\bibitem{yau2}
D. Yau, Enveloping algebras of Hom-Lie algebras, {\em J. Gen. Lie Theory Appl.} 2 (2008), no. 2, 95-108. 

\end{thebibliography}
\end{document}